\newcommand{\C}{\mathbb{C}}
\newcommand{\Z}{\mathbb{Z}}
\newcommand{\ra}{\rightarrow}
\newtheorem{thm}{Theorem}
\newtheorem{prop}[thm]{Proposition}
\newtheorem{cor}[thm]{Corollary}
\title{A remark on the Torelli theorem for cubic fourfolds}
\author{Fran\c{c}ois Charles}
\email{francois.charles@univ-rennes1.fr}
\address{Universit\'e de Rennes 1, IRMAR--UMR 6625 du CNRS, Campus de Beaulieu, 35042 Rennes
Cedex, France}
\begin{document}

\begin{abstract}
In this note, we give a short proof of the Torelli theorem for cubic fourfolds that relies on the global Torelli theorem for irreducible holomorphic symplectic varieties proved by Verbitsky.
\end{abstract}

\maketitle

\section{Introduction}

A Torelli theorem for a given family of complex projective varieties states, roughly, that two members of this family can be distinguished by Hodge-theoretic data. In this paper, we are interested with smooth cubic hypersurfaces in $\mathbb P^5_{\C}$. 

The interesting part of the cohomology algebra of a cubic fourfold $X$ lies in degree $4$. The cohomology class $h$ of a hyperplane section induces a distinguished class $h^2\in H^4(X, \Z)$. The global Torelli theorem for cubic fourfolds was proved by Voisin in \cite{VoisinT} and reproved recently by Looijenga in \cite{LooijengaT} using different methods.

\begin{thm}[\cite{VoisinT, LooijengaT}]\label{Torellicubic}
Let $X$ and $X'$ be two smooth complex cubic fourfolds, and let 
$$\phi : H^4(X, \Z)\ra H^4(X', \Z)$$
be an isomorphism of polarized Hodge structures preserving the class $h^2$ of a linear section. Then there exists a projective isomorphism $f : X'\ra X$ such that $\phi=f^*$.
\end{thm}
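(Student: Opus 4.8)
The plan is to reduce the statement to Verbitsky's global Torelli theorem for irreducible holomorphic symplectic manifolds by passing to the Fano variety of lines. To a smooth cubic fourfold $X\subset\PP^5$ one associates its variety of lines $F(X)$; by a theorem of Beauville and Donagi, $F(X)$ is a smooth projective irreducible holomorphic symplectic fourfold of $K3^{[2]}$-deformation type, and it carries a natural Pl\"ucker polarization $g_X$ coming from the embedding $F(X)\subset\mathrm{Gr}(2,6)$. Moreover, the incidence correspondence between $X$ and $F(X)$ induces an Abel--Jacobi isomorphism of Hodge structures $H^4(X,\Z)_{\mathrm{prim}}\cong H^2(F(X),\Z)_{\mathrm{prim}}$ (up to a Tate twist) which is compatible, up to a fixed nonzero scalar, with the intersection form on the left and the Beauville--Bogomolov--Fujiki form on the right; here the subscript denotes the orthogonal complement of $h^2$, respectively of $g_X$. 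I would take all of this as input.

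First I would transport the Hodge-theoretic data. Since $\phi$ preserves $h^2$, it restricts to a Hodge isometry $H^4(X,\Z)_{\mathrm{prim}}\cong H^4(X',\Z)_{\mathrm{prim}}$; conjugating by the two Abel--Jacobi isomorphisms yields a Hodge isometry between $H^2(F(X),\Z)_{\mathrm{prim}}$ and $H^2(F(X'),\Z)_{\mathrm{prim}}$. Extending it by $g_X\mapsto g_{X'}$, and using that $g_X$, $g_{X'}$ are determined by $h^2$ under the correspondence, produces a Hodge isometry $\psi: H^2(F(X),\Z)\ra H^2(F(X'),\Z)$ with $\psi(g_X)=g_{X'}$.

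Then comes the heart of the argument, where Verbitsky's theorem is applied. I would check that $\psi$ satisfies the two hypotheses needed to be induced by a geometric map: that it is a parallel transport operator, i.e. lies in the monodromy group, and that it sends a K\"ahler class to a K\"ahler class. For the $K3^{[2]}$ type Markman has identified the monodromy group with the group of orientation-preserving isometries, so the point is to verify that $\psi$ preserves the orientation of a maximal positive-definite subspace. A Hodge isometry automatically preserves the canonically oriented real plane spanned by the real and imaginary parts of the holomorphic two-form, and since $\psi$ sends the ample class $g_X$ to the ample class $g_{X'}$ it preserves the distinguished component of the positive cone; together these give the orientation, so $\psi$ is a parallel transport operator. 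The same ample-to-ample property is exactly what upgrades Verbitsky's conclusion from a birational map to an isomorphism $f_F: F(X')\ra F(X)$ inducing $\psi$.

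It remains to descend from $F$ to $X$. The isomorphism $f_F$ preserves the Pl\"ucker polarizations, so it is compatible with the Pl\"ucker embeddings into $\mathrm{Gr}(2,6)=\PP(\wedge^2 V)$, where $\PP^5=\PP(V)$; since the cubic is swept out by its lines, $X$ is reconstructed from the pair $(F(X),g_X)$, and $f_F$ descends to a projective isomorphism $f: X'\ra X$. Naturality of the Abel--Jacobi construction then forces $f^*$ and $\phi$ to agree on $H^4_{\mathrm{prim}}$ and on $h^2$, hence on the finite-index sublattice they generate, and therefore on all of $H^4(X,\Z)$ since $H^4(X',\Z)$ is torsion-free. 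The step I expect to be most delicate is this final descent together with the bookkeeping of orientations and signs in the previous paragraph: one must normalize the Abel--Jacobi isomorphism so that $\psi$, and not $-\psi$, is the parallel transport operator sending ample to ample, and one must ensure the reconstruction of the cubic from its polarized variety of lines is faithful enough to recover $\phi$ exactly rather than merely up to an automorphism.
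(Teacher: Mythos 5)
Your proposal is correct in outline, but it follows a genuinely different route from the paper. You apply the Hodge-theoretic form of the global Torelli theorem \emph{pointwise}: transport $\phi$ through the Abel--Jacobi isometry to a Hodge isometry $\psi$ of the full degree-two lattices of the Fano varieties, verify that $\psi$ is a parallel transport operator using Markman's computation that the monodromy group in the $K3^{[2]}$ deformation class is the full group of orientation-preserving isometries (together with the canonical orientation of the positive three-space), and invoke the K\"ahler-class criterion to obtain a biregular $f_F$. The paper never considers an individual Hodge isometry: it reformulates the theorem as injectivity of the period map $p$ on the moduli space $\mathcal M$ of marked cubics, uses only the weaker statement (Markman's Theorem 2.2 (5), combining Verbitsky's theorem with Huybrechts' result on non-separated points) that the fiber of $p'$ over a \emph{very general} period contains exactly one point of each connected component of $\mathcal M'$, pushes this generic injectivity through the incidence variety $\mathcal N$, upgrades it to injectivity on each component by separatedness of $\mathcal M$, and disposes of the two components via Beauville's monodromy theorem for marked cubics. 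Your route buys a direct, statement-level proof with no moduli bookkeeping and no need for Beauville's connected-component result; the paper's route buys independence from the monodromy-group computation and from the lattice-theoretic care your argument requires. Two of your steps need more than you state, though both are fillable. First, extending the primitive Hodge isometry by $g_X\mapsto g_{X'}$ to an \emph{integral} isometry $H^2(F(X),\Z)\ra H^2(F(X'),\Z)$ is a gluing problem across the finite-index overlattice $\Z g\oplus H^2_{\mathrm{prim}}\subset H^2$, settled by a discriminant-group computation (it works because $\phi$ is integral on $H^4$ and fixes $h^2$, hence acts trivially on the relevant discriminant group). Second, your descent needs that the Pl\"ucker embedding of $F$ is given by the complete linear system of its polarization, so that $f_F^*g_X=g_{X'}$ yields a projective automorphism of $\PP(\bigwedge^2 V)$ carrying $F(X')$ to $F(X)$; and then one needs exactly the paper's recovery proposition, namely that this automorphism preserves the Grassmannian because $G$ is the intersection of the quadrics through $F$ (Altman--Kleiman) and descends to $\PP(V)$ by Chow's theorem --- the remark that the cubic is swept out by its lines does not by itself produce the map $X'\ra X$.
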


In both the proofs of Voisin and Looijenga, Theorem \ref{Torellicubic} is proved, among other considerations, through a detailed analysis of the geometry of a distinguished class of cubic fourfolds, be it the ones containing a plane in \cite{VoisinT} or specific singular ones in \cite{LooijengaT}. 

In the recent paper \cite{VerbitskyT}, Verbitsky has proven a global Torelli theorem for irreducible holomorphic symplectic varieties. We refer to the nice surveys \cite{MarkmanT} and \cite{HuybrechtsT} for a thorough discussion of this result. Unlike the proofs of the Torelli theorem for cubic fourfolds, Verbitsky's proof does not rely on the study of specific irreducible holomorphic symplectic varieties, focusing instead on making full use of the hypercomplex structure of these varieties.

As is well-known from the work of Beauville-Donagi \cite{BD} -- and as used in Voisin's work in \cite{VoisinT} -- the Fano variety of lines of a cubic fourfold is an irreducible holomorphic symplectic variety. In this note, we make use of this fact to show that Theorem \ref{Torellicubic} can be deduced in an elementary way from Verbitsky's global Torelli theorem. As the discussion above shows, this leads to a proof of Theorem \ref{Torellicubic} that does not rely in any way on the study of specific cubic hypersurfaces.

\textbf{Acknowledgements.} I was happy to benefit from helpful discussions with Claire Voisin and with Arnaud Beauville, who also pointed me to the reference \cite{AltmanKleiman}. It is a pleasure to thank them.

\section{The period map and Verbitsky's theorem}

Let $X$ be a cubic fourfold over $\C$. Its fourth singular cohomology group $H^4(X, \Z)$ with integral coefficients is endowed with a polarized Hodge structure and contains a distinguished class $h^2$ that is the square of a hyperplane section. Let $H^4(X, \Z)_0$ denote the primitive cohomology group of $X$, that is, the orthogonal of $h^2$ with respect to cup-product.

Let $(L,u)$ be an abstract lattice with a distinguished element $u$ isomorphic to the pair $(H^4(X, \Z), h^2)$. Let $L_0$ be the orthogonal complement of $u$ in $L$ and let $\mathcal D$ be the period domain associated to $L$, that is, 
$$\mathcal D=\{x\in \mathbb P(L_0\otimes \C), (x, x)=0\, \mathrm{and}\, (x, \overline x)>0\}.$$
Let $\mathcal M$ be the moduli space of marked smooth cubic fourfolds. It parametrizes pairs $(X, \phi)$ where $X$ is a smooth cubic fourfold and $\phi : H^4(X, Z)\ra L$ is an isometry sending $h^2$ to $u$, up to projective isomorphism. The period map $p : \mathcal M \ra \mathcal D$ sends a pair $(X, \phi)$ to the line $\phi(H^{3, 1}(X))$ in $\mathcal D$. Standard Hodge theory shows that $p$ is a local isomorphism. The following follows formally from the definitions.

\begin{prop}\label{enough}
Theorem \ref{Torellicubic} holds if and only if $p$ is injective.
\end{prop}

Let $F$ be the variety of lines on $X$. By \cite{BD}, $F$ is an irreducible holomorphic symplectic variety. Via the Pl\"ucker embedding, $F$ is endowed with a canonical polarization. By an abuse of notation, we denote by $h\in H^2((F, \Z)$ the class of the corresponding line bundle.

By the general theory of \cite{Beauville}, the second cohomology group of $F$ $H^2(F, \Z)$ is endowed with a canonical polarization $q$, the Beauville-Bogomolov form. By \cite{BD}, the Abel-Jacobi map induces an isometry between the primitive cohomology groups
\begin{equation}\label{AJ}
i : H^4(X, Z)_0\ra H^2(F, \Z)_0.
\end{equation}
Let $(L',u')$ be an abstract lattice with a distinguished element $u'$ isomorphic to the pair $(H^2(F, \Z), h)$. We identify the orthogonal complement of $u'$ in $L'$ to $L_0$.

Let $\mathcal M'$ be the connected component of the moduli space of marked irreducible holomorphic symplectic varieties $(F, \psi :  (H^2(F, \Z), h)\ra (L', u'))$ containing the Fano varieties of lines of cubic fourfolds. As above, let $p' : \mathcal M'\ra \mathcal D$ be the period map that sends $(F, \psi)$ to the line $\psi(H^{2, 0}(F))$ in $\mathcal D$. By \cite[Th\'eor\`eme 5]{Beauville}, $p'$ is a local isomorphism.

Finally, let $\mathcal N$ be the incidence variety that parametrizes tuples $(X, F, \phi, \psi)$ where $X$ is a cubic fourfold, $F$ its variety of lines, and $\phi$, $\psi$ are marking of $X$ and $\psi$ respectively such that $\phi_{|H^4(X, \Z)_0}=\psi\circ i$. Let $\pi$ and $\pi'$ denote the two canonical projections from $\mathcal N$ to $\mathcal M $ and $\mathcal M'$ respectively. An elementary computation shows that $\pi$ and $\pi'$ are both local isomorphisms. Furthermore, $\pi$ is injective.

The maps defined above fit into the following commutative diagram.

\begin{equation}
\xymatrix{
& \mathcal N\ar[dl]^{\pi}\ar[dr]^{\pi'}\\
\mathcal M \ar[dr]^p&  & \mathcal M'\ar[dl]^{p'}\\
& \mathcal D}
\end{equation}

The result we state now, due to Markman in \cite{MarkmanT}, combines Verbitsky's global Torelli theorem in \cite{VerbitskyT} with a result of Huybrechts \cite{Huy} describing non-separated points in the moduli space of holomorphic symplectic varieties. It is the key input of this note.

\begin{prop}
The period map $p'$ is generically injective on each connected component of $\mathcal M'$.
\end{prop}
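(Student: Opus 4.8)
The plan is to derive the statement by feeding Huybrechts' description of non-separated points into Verbitsky's global Torelli theorem, and then cutting down to periods of minimal Picard rank. First I would record the precise form of Verbitsky's theorem that Markman extracts: on the connected component $\mathcal M'$, the period map $p'$ is surjective onto $\mathcal D$, and any two marked varieties $(F_1,\psi_1)$ and $(F_2,\psi_2)$ lying in the same fibre $p'^{-1}(d)$ are \emph{non-separated} (inseparable) points of $\mathcal M'$. Since $p'$ is a local isomorphism, proving generic injectivity reduces to exhibiting a dense subset of $\mathcal D$ over which these fibres are singletons.

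Next I would invoke the input of Huybrechts on inseparable points: if $(F_1,\psi_1)$ and $(F_2,\psi_2)$ are non-separated in $\mathcal M'$, then the isometry $\psi_2^{-1}\circ\psi_1 : H^2(F_1,\Z)\ra H^2(F_2,\Z)$ is induced by a bimeromorphic map $f : F_1\dashrightarrow F_2$. In particular $f$ is a Hodge isometry on $H^2$ compatible with the markings, so it sends the marked class $h_1=\psi_1^{-1}(u')$ to $h_2=\psi_2^{-1}(u')$.

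The key reduction is to a very general period. The integral vectors of $L_0$ form a countable set, and for each nonzero $\lambda\in L_0$ the condition $(x,\lambda)=0$ cuts out a proper closed analytic subset of $\mathcal D$. Let $\mathcal D^0\subset\mathcal D$ be the complement of the (countable) union of these subsets, a dense subset. For $d\in\mathcal D^0$ and $(F,\psi)\in p'^{-1}(d)$, an integral class of $H^2(F,\Z)$ is of type $(1,1)$ if and only if it is orthogonal to the period line $\psi(H^{2,0}(F))=d$; by the definition of $\mathcal D^0$ the only such classes are the rational multiples of $u'$, so $\mathrm{NS}(F)$ has rank one and contains the class $u'$ of positive square. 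Hence $F$ is projective and its positive cone carries no wall-divisors, so the Kähler cone is a full connected component of the positive cone. Applying this to both $F_1$ and $F_2$ and matching the markings, the bimeromorphic map $f$ identifies their Kähler cones, and Huybrechts' criterion then forces $f$ to be a biregular isomorphism. Thus $(F_1,\psi_1)$ and $(F_2,\psi_2)$ define the same point of $\mathcal M'$, so $p'$ is injective over $\mathcal D^0$; as $\mathcal D^0$ is the complement of a countable union of proper analytic subsets, $p'$ is generically injective. The same argument applies verbatim to any connected component.

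I expect the genuine content to lie in the passage from \emph{bimeromorphic} to \emph{biregular}, that is, in controlling the birational models of $F$. The clean statement is that a birational map of irreducible holomorphic symplectic varieties which matches up Kähler classes is an isomorphism; the delicate point is to ensure that the marked class $u'$ lies in the Kähler cone, and not merely in the positive cone, of each member of the fibre. This is exactly what the very general period buys us: Picard rank one removes every wall, so the positive cone is not subdivided and $u'$ is automatically Kähler, which is the arrangement organised by Markman through the chamber structure. The remaining points — that the exceptional locus is a countable union of analytic hypersurfaces, and that $p'$ being a local isomorphism upgrades fibrewise injectivity over $\mathcal D^0$ to generic injectivity — are routine.
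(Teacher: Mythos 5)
Your proposal is correct and takes essentially the same route as the paper: the paper restricts to a very general period of $\mathcal D$ and cites Markman's Theorem 2.2(5), which packages precisely the chain you spell out — Verbitsky's inseparability of points in a fibre, Huybrechts' bimeromorphic description of inseparable points, and the Picard-rank-one/K\"ahler-cone-equals-positive-cone argument forcing the bimeromorphic map to be biregular. The only difference is that you re-derive the cited black box instead of quoting it, and in doing so you state the genericity condition more carefully than the paper (which writes that no element $x\in L$ is orthogonal to the period, whereas $\psi^{-1}(u')$ is always of type $(1,1)$ on $\mathcal D$, so the correct condition — and the one matching Markman's hypothesis — is, as you say, that the integral $(1,1)$-classes are spanned by the positive-square class $u'$).
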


\begin{proof}
If $\C u$ be a very general point of $\mathcal D$, there is no element $x\in L$ orthogonal to $u$. By \cite[Theorem 2.2, (5)]{MarkmanT}, which encompasses both the global Torelli theorem of Verbitsky and Huybrechts' result alluded above, the fiber of $p'$ at $\C u$ contains exactly one point in each connected component of $\mathcal M'$.
\end{proof}

\section{Recovering a cubic hypersurface from its variety of lines, and proof of Theorem \ref{Torellicubic}}

Before proving Theorem \ref{Torellicubic}, we show that the variety of lines of a cubic determines the cubic itself.

\begin{prop}
Let $k$ be a field of characteristic different from $3$. Let $X$ and $X'$ be two cubic hypersurfaces of dimension $d\geq 3$ over $k$ with isolated singularities. Let $F$ and $F'$ be the Fano varieties of lines on $X$ and $X'$ respectively.

Let $g : F\ra F'$ be a projective isomorphism with respect to the Pl\"ucker embeddding of $F$ and $F'$. Then there exists a projective isomorphism $f : X\ra X'$ inducing $g$.
\end{prop}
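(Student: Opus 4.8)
The plan is to reconstruct $X$ from $F$ via the elementary principle that $X$ is the union of its lines, the real content being to recover the ambient projective space from the Plücker geometry of $F$. Write $V,V'$ for the $(d+2)$-dimensional spaces, so that $\PP(V),\PP(V')$ are the ambient spaces of $X,X'$ and $F\subset G=G(2,V)$, $F'\subset G'=G(2,V')$ sit inside $\PP(\wedge^2 V)$ and $\PP(\wedge^2 V')$ by Plücker, with $\mathcal O_F(1)=\det\mathcal S^\vee|_F$ for the tautological rank-$2$ subbundle $\mathcal S$. Since $g$ is a projective isomorphism for these embeddings, it is the restriction of a linear isomorphism $\Phi:\wedge^2 V\ra\wedge^2 V'$ with $\PP(\Phi)(F)=F'$. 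The whole problem is then to show that $\Phi$ is, up to scalar, of the form $\wedge^2\psi$ for a linear isomorphism $\psi:V\ra V'$: granting this, $f=\PP(\psi)$ carries every line of $X$ to a line of $X'$, hence maps $X=\bigcup_{\ell\in F}\ell$ onto $X'=\bigcup_{\ell'\in F'}\ell'$ (lines cover $X$ and $X'$ for $d\geq 3$ with isolated singularities) and induces $g$ on Fano varieties.

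To produce $\psi$ I would reconstruct the points of $\PP(V)$ from the lines through them. For a general $x\in X$ the lines of $X$ through $x$ form a subvariety $F_x\subset F$ whose members, as $2$-planes, are the $\hat x\wedge w$ with $w$ ranging over the tangent hyperplane $T_xX\subset V$; since $\hat x$ is one-dimensional, every element of the span $\Pi_x=\langle F_x\rangle\subset\PP(\wedge^2 V)$ is decomposable with common factor $\hat x$. For general $x$ the directions of these lines span $T_xX/\hat x$, so $\Pi_x=\PP(\hat x\wedge T_xX)$ is a $(d-1)$-plane from which $\hat x$ is recovered as the common factor of its members. Thus $x\mapsto\Pi_x$ is injective and exhibits a dense subset of $\PP(V)$ as a family of linear subspaces of the Plücker space lying on $G$.

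The crucial point is that this family should be recognizable from $(F,\mathcal O_F(1))$ alone: the lines in a given $F_x$ are pairwise incident, incidence of two lines of $F$ is read off projectively from the condition that the segment joining their Plücker points lie on the Grassmannian, and among pairwise-incident families the point-type ones are distinguished from the plane-type ones by dimension once $d\geq 3$. Granting that $\Phi$ carries the family $\{F_x\}$ to $\{F'_{x'}\}$, we obtain $\Phi(\Pi_x)=\Pi'_{x'}$, and comparing common factors yields a bijection $\hat x\mapsto\hat{x'}$ on dense subsets. One then checks that this assignment is the restriction of a unique linear isomorphism $\psi:V\ra V'$ — it preserves collinearity along the lines of $X$, which for $d\geq 3$ suffices to force linearity — and that $\wedge^2\psi$ agrees with $\Phi$ up to scalar on the Plücker points of $F$. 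The hypothesis $\mathrm{char}\,k\neq 3$ enters in the polarity computation describing $F_x$ and the tangent hyperplane $T_xX$, which breaks down in characteristic $3$.

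The step I expect to be the main obstacle is precisely the claim that $\Phi$ respects the family $\{F_x\}$. A priori $\Phi$ matches $F$ with $F'$ only as linear sections of the Plücker spaces, and there is no reason for a mere linear map to preserve incidence of lines; upgrading $\PP(\Phi)(F)=F'$ to compatibility with the incidence structure amounts to recovering the Grassmannian $G$ — or at least its ruling by the $\Pi_x$ — from $F$ alone. This is where I would bring in the geometry of the Fano scheme, following Altman–Kleiman: that $F$ has the expected dimension and is non-degenerate, that its lines cover $X$, and that through a general point the lines behave generically, so that the spans $\Pi_x$ are the distinguished maximal linear subspaces $\Phi$ must match. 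An appealing alternative, bypassing the point-by-point reconstruction, is to prove that the Plücker quadrics are exactly the quadrics vanishing on $F$; then $\Phi$ would carry the degree-two part of the ideal of $F$ to that of $F'$, hence $G$ to $G'$, and the classical description of the linear automorphisms preserving a Grassmannian (which, as $\dim V\geq 5$, involves no duality) would give $\psi$ directly.
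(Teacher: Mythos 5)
Your primary route is incomplete at precisely the step you flag yourself: nothing in the proposal shows that a linear isomorphism $\Phi : \bigwedge^2 V \ra \bigwedge^2 V'$ with $\PP(\Phi)(F)=F'$ must respect the incidence structure of lines, hence carry the family $\{F_x\}$ to the family $\{F'_{x'}\}$. As you observe, establishing this amounts to recovering the Grassmannian (or its ruling by the $\Pi_x$) from the pair $(F,\mathcal{O}_F(1))$ alone, and your proposal supplies no argument for it; nor does it verify the subsidiary claims (that the lines through a general $x$ span $T_xX$, that $\hat x\mapsto\hat{x'}$ is linear), so as written the mainline is a genuine gap, not a proof.

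However, the ``appealing alternative'' in your closing sentences is exactly the paper's proof, and it closes the gap with a single citation: by Altman--Kleiman, 1.16(iii), the intersection of the quadric hypersurfaces of $\PP(\bigwedge^2 V)$ containing $F$ is equal to $G$. Since $G$ is itself cut out by quadrics, this says precisely that every quadric through $F$ contains $G$, i.e.\ the degree-two parts of the ideals of $F$ and of $G$ coincide (and likewise for $F'$, $G'$). Consequently $\PP(\Phi)$ carries $G$ to $G'$, so $g$ extends to a projective isomorphism of the Grassmannians; Chow's theorem then produces $\psi : V\ra V'$ inducing it (as you correctly note, $\dim V=d+2\geq 5$ excludes the duality automorphism of $G(2,4)$), and since a cubic hypersurface of dimension $d\geq 3$ is the union of its lines, $f=\PP(\psi)$ maps $X$ onto $X'$ and induces $g$. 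So the repair is organizational rather than mathematical: promote the alternative to the main argument, and note that the statement you propose ``to prove'' about the Pl\"ucker quadrics is exactly the Altman--Kleiman citation --- this is also where the hypotheses (characteristic $\neq 3$, isolated singularities) are consumed, rather than in a polarity computation for $\Pi_x$, which the paper's argument never needs.
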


\begin{proof}
Let $V$ be the standard $(d+2)$-dimensional vector space over $k$, so that $X$ and $X'$ are hypersurfaces in $\mathbb P(V)$. Let $G$ be the grassmannian variety of lines in $\mathbb P(V)$. The Pl\"ucker embedding is the canonical embedding of $G$ in $\mathbb P(\bigwedge^2 V)$. 

The Fano varieties $F$ and $F'$ are subvarieties of $G$. By \cite[1.16 (iii)]{AltmanKleiman}, and since $G$ is defined by quadratic equations in $\mathbb P(\bigwedge^2 V)$, the intersection of the quadric hypersurfaces in $\mathbb P(\bigwedge^2 V)$ containing $F$ (resp. $F'$) is equal to $G$. It follows from this remark that there exists a projective automorphism $g'$ of $G$ sending $F$ to $F'$ and such that $g'_{|F}=g$.

By a classical theorem of Chow \cite{Chow}, there exists an automorphism $f'$ of $\mathbb P(V)$ inducing $g'$.This means that for any line $L$ on $X$ corresponding to a point $\ell$ of $F$, the line $f'(L)$ lies on $X'$ and corresponds to the point $g(\ell)\in F'$. It follows easily that $f'$ sends $X$ to $X'$ and that the restriction of $f'$ to $X$ is a projective isomorphism $f : X\ra X'$ inducing $g$.
\end{proof}

The next corollary is immediate.

\begin{cor}
The map $\pi'$ is injective.
\end{cor}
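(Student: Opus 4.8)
The plan is to unwind the definitions of $\mathcal N$ and $\mathcal M'$ and reduce the statement directly to the Proposition just proved. Concretely, I would start with two points of $\mathcal N$, say $(X, F, \phi, \psi)$ and $(X', F', \phi', \psi')$, having the same image under $\pi'$. By definition of $\mathcal M'$ as a moduli space of \emph{marked} varieties, this means there is an isomorphism $g : F\ra F'$ of the underlying holomorphic symplectic varieties compatible with the markings, that is, $\psi\circ g^* = \psi'$. The two tuples will define the same point of $\mathcal N$ as soon as I produce a projective isomorphism $X\ra X'$ compatible with all four pieces of data, so the whole task is to manufacture such an $f$ out of $g$.

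First I would check that $g$ is a projective isomorphism with respect to the Pl\"ucker embeddings. Since both markings send the Pl\"ucker polarizations $h$ and $h'$ to the same distinguished element $u'$ of $L'$, the relation $\psi\circ g^* = \psi'$, evaluated on $h'$, forces $g^* h' = h$; hence $g$ respects the polarizations and is projective. The Proposition then applies verbatim (a smooth cubic fourfold has dimension $4\geq 3$ and, a fortiori, isolated singularities, and $\mathrm{char}\,\C \neq 3$) and produces a projective isomorphism $f : X\ra X'$ inducing $g$.

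It then remains to verify that $f$ is compatible with the two markings, i.e. that $\phi\circ f^* = \phi'$, and I would check this on the two natural pieces of $H^4(X', \Z)\otimes\Q$. On the distinguished class $h'^2$ one has $f^* h'^2 = h^2$ because $f$ is a projective isomorphism, while both $\phi$ and $\phi'$ send the respective squares of the hyperplane class to $u$; so the two maps agree there. On the primitive part $H^4(X', \Z)_0$ I would use the defining compatibility of $\mathcal N$, namely $\phi_{|H^4(X,\Z)_0} = \psi\circ i$ and likewise $\phi'_{|H^4(X',\Z)_0} = \psi'\circ i'$, together with the naturality of the Abel--Jacobi isometry (\ref{AJ}): because $f$ induces $g$, the incidence correspondences match up and one gets $i\circ f^* = g^*\circ i'$ on primitive cohomology. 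Combining these identities with $\psi\circ g^* = \psi'$ yields $\phi\circ f^* = \psi\circ i\circ f^* = \psi\circ g^*\circ i' = \psi'\circ i' = \phi'$ on $H^4(X', \Z)_0$. Since $H^4(X', \Z)\otimes\Q = \Q h'^2\oplus (H^4(X', \Z)_0\otimes\Q)$ by the primitive decomposition and the target lattice $L$ is torsion-free, agreement on these two pieces forces $\phi\circ f^* = \phi'$, so the two tuples coincide in $\mathcal N$ and $\pi'$ is injective.

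The substance of the argument is entirely contained in the Proposition; the only point requiring care in the corollary itself is the bookkeeping of the last paragraph, and in particular the naturality statement $i\circ f^* = g^*\circ i'$ for the Abel--Jacobi map, which is precisely where the geometric input that $f$ \emph{induces} $g$ gets used. Everything else is formal.
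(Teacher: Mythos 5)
Your proposal is correct and is precisely the argument the paper intends: the paper declares the corollary ``immediate'' from the preceding proposition, and your write-up simply supplies the routine bookkeeping (extracting $g$ from the equality of marked pairs, checking $g^*h'=h$, applying the proposition to get $f$, and verifying $\phi\circ f^*=\phi'$ via the naturality $i\circ f^*=g^*\circ i'$ of the Abel--Jacobi map and the primitive decomposition). No genuinely different route is taken, and your identification of the naturality of (\ref{AJ}) as the one point where ``$f$ induces $g$'' is used is exactly right.
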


\begin{prop}\label{comp}
The period map $p$ is injective on each connected component of $\mathcal M$.
\end{prop}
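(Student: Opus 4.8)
The plan is to combine the three ingredients assembled above -- the commutative diagram, the injectivity of the two bridges $\pi$ and $\pi'$, and the generic injectivity of $p'$ -- to first obtain \emph{generic} injectivity of $p$, and then to promote this to honest injectivity on each connected component. The promotion step is where the geometry of cubics, as opposed to general holomorphic symplectic varieties, really enters: it rests on the fact that $\mathcal M$, unlike $\mathcal M'$, is separated.

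First I would fix a connected component $\mathcal M_0$ of $\mathcal M$ and record the local picture. Both $\pi$ and $\pi'$ are injective local isomorphisms, hence open immersions; moreover $\pi$ is surjective, since every smooth cubic fourfold produces a Fano variety of lines on which the Abel--Jacobi isometry $i$ turns the marking $\phi$ into a compatible marking $\psi$. Thus $\pi$ is an isomorphism and $\rho:=\pi'\circ\pi^{-1}$ identifies $\mathcal M$ with an open subset of $\mathcal M'$ in such a way that $p=p'\circ\rho$. To get generic injectivity, let $\C u\in\mathcal D$ be very general and let $m_1,m_2\in\mathcal M_0$ satisfy $p(m_1)=p(m_2)=\C u$. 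Then $\rho(m_1),\rho(m_2)$ lie in the fiber $p'^{-1}(\C u)$; since $\mathcal M'$ is connected, the preceding proposition (Markman, via Verbitsky and Huybrechts) shows this fiber consists of a single point, so $\rho(m_1)=\rho(m_2)$ and hence $m_1=m_2$. Therefore $p$ is injective over the very general points of $\mathcal D$.

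The remaining, and crucial, step is to upgrade this to injectivity over \emph{all} points. Here I would use that $p$ is a local isomorphism together with the separatedness of $\mathcal M$: the coarse moduli space of smooth cubic fourfolds is a quasi-projective variety and markings are locally constant, so $\mathcal M_0$ is Hausdorff. Suppose $m_1\ne m_2$ in $\mathcal M_0$ with $p(m_1)=p(m_2)$. Using the Hausdorff property, choose disjoint open neighborhoods $U_1\ni m_1$ and $U_2\ni m_2$ and, shrinking them, arrange that $p$ restricts to isomorphisms $U_i\xrightarrow{\sim}V$ onto one common open set $V$. Then for every $v\in V$ the two points $(p|_{U_1})^{-1}(v)$ and $(p|_{U_2})^{-1}(v)$ are distinct elements of $p^{-1}(v)$, so $p$ is at least two-to-one over all of $V$; since $V$ is open it contains very general points, contradicting the previous step. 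Hence $p$ is injective on $\mathcal M_0$.

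I expect this last upgrade to be the main obstacle, and it is exactly where the argument uses that we work with cubic fourfolds rather than arbitrary holomorphic symplectic varieties. Verbitsky's theorem yields only generic injectivity of $p'$ because $\mathcal M'$ is non-separated -- its non-separated points being the bimeromorphic partners analyzed by Huybrechts -- so the disjoint-neighborhood argument is simply unavailable on the symplectic side. The separatedness of the cubic moduli space is what breaks these coincidences and converts generic injectivity into genuine injectivity. A secondary point to pin down is the surjectivity of $\pi$, i.e.\ that the Abel--Jacobi-induced marking on the Fano variety extends integrally; this is checked by the same elementary lattice computation that shows $\pi$ is a local isomorphism, and if one prefers to avoid it one may instead prove injectivity on the components met by $\pi$ and transport the conclusion to the others via the action of the orthogonal group on markings.
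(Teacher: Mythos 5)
Your overall strategy coincides with the paper's: transfer the generic injectivity of $p'$ to $p$ through the incidence variety $\mathcal N$, using the injectivity of $\pi'$, and then upgrade generic injectivity to genuine injectivity via the separatedness of $\mathcal M$ together with the fact that $p$ is a local isomorphism. That last step is exactly the paper's ``generically injective, hence injective since $\mathcal M^0$ is separated,'' and your disjoint-neighborhood argument fills it in correctly. The one genuine gap is the claim, on which your main route rests, that $\pi$ is surjective. Surjectivity is not the statement that every cubic has a Fano variety of lines; it is the statement that for \emph{every} marking $\phi$ of $X$ there exists a marking $\psi$ of $F$ with $\psi\circ i=\phi_{|H^4(X,\Z)_0}$ and $\psi(h)=u'$. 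Since $H^2(F,\Z)$ is a proper overlattice of $\Z h\oplus H^2(F,\Z)_0$ (the Beauville--Bogomolov square of $h$ is $6$, and the direct sum sits inside with index $3$), the map prescribed by $h\mapsto u'$ and by $\phi\circ i^{-1}$ on the primitive part is a priori defined only on this finite-index sublattice, and whether it carries all of $H^2(F,\Z)$ into $L'$ is a gluing condition on discriminant groups. This is emphatically not ``the same elementary lattice computation that shows $\pi$ is a local isomorphism'': that computation is deformation-theoretic, identifying period tangent spaces, and says nothing about which markings lift. (The obstruction does in fact vanish --- one can check that an isometry of $L$ fixing $u$ must act trivially on the discriminant group of $L_0$ --- but that verification is a real step your proposal omits.)

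The paper sidesteps the question entirely: it fixes a connected component $\mathcal N^0$ of $\mathcal N$, lets $\mathcal M^0$ and $\mathcal M'^0$ be the closures of its images under $\pi$ and $\pi'$, observes that $\pi_{|\mathcal N^0}$ is dominant onto $\mathcal M^0$ while $\pi'$ and $p'_{|\mathcal M'^0}$ are generically injective, concludes that $p_{|\mathcal M^0}$ is generically injective and hence injective by separatedness, and then propagates the conclusion to the remaining components using the $O(L,u)$-equivariance of $p$ (transitivity on components being Beauville's theorem, invoked in the proof of the main theorem). Your fallback --- prove injectivity on the components met by $\pi$ and transport it to the others via the orthogonal group acting on markings --- is precisely this argument; promoted from a parenthetical remark to the actual proof, it makes your write-up match the paper. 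Your diagnosis that the separatedness of $\mathcal M$, as against the non-separatedness of $\mathcal M'$, is the decisive point of the upgrade step is accurate and consistent with the paper's framing.
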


\begin{proof}
Let $O(L, u)$ be the group of orthogonal automorphisms of $L$ fixing $u$. The group $O(L, u)$ acts on both $\mathcal M$ and $\mathcal D$, and the map $p: \mathcal M\ra \mathcal D$ is equivariant with respect to this action. In particular, we only have to find a connected component $\mathcal M^0$ of $\mathcal M$ such that the restriction of $p$ to $\mathcal M^0$ is injective.

Let $\mathcal N^0$ be a connected component of $\mathcal N$, and let $\mathcal M^0$ (reps. $\mathcal M'^0$) be the closure of the image of $\mathcal N^0$ in $\mathcal M$ (resp. $\mathcal M'$) by $\pi$ (resp. $\pi'$). In the diagram 
\begin{equation}
\xymatrix{
& \mathcal N^0\ar[dl]^{\pi}\ar[dr]^{\pi'}\\
\mathcal M^0 \ar[dr]^p&  & \mathcal M'^0\ar[dl]^{p'}\\
& \mathcal D}
\end{equation}
the maps $\pi'$ and $p'_{|\mathcal M'^0}$ are generically injective, and the map $\pi_{|\mathcal N^0}$ is dominant, which proves that $p'_{|\mathcal M^0}$ is generically injective, hence injective since $\mathcal M^0$ is separated.
\end{proof}

\begin{proof}[Proof of Theorem \ref{Torellicubic}]
By Proposition \ref{enough}, we only have to prove that the period map $p$ is injective. Let $O^+(L, u)$ be the subgroup of $O(L, u)$ consisting of elements of real spinor norm $1$. The two components of the period domain $\mathcal D$ are exchanged under the action of $O(L, u)$, and the subgroup that fixes the components is $O^+(L, u)$.

On the moduli side, it is a result of Beauville in \cite[Th\'eor\`eme 2]{beauvillemon} that $\mathcal M$ has two components which are exchanged by $O(L, u)$, and that the subgroup that fixes the components is $O^+(L, u)$ as well. Since $p$ is injective on each connected component of $\mathcal M$, this proves that $p$ is injective.

\end{proof}

\noindent\textbf{Remark.} It is possible to investigate variants of this situation. For instance, in \cite{VoisinT}, the period map for marked cubic fourfolds containing a plane is thoroughly investigated, and it is proven there that the period map is $2$ to $1$. This discrepancy from Theorem \ref{Torellicubic} is exactly due to the failure of the monodromy argument above, namely, to the fact that the group fixing the connected components of the moduli space of marked cubic fourfolds containing a plane is smaller than the group fixing the components of the corresponding period domain.

\bibliography{torelli}{}
\bibliographystyle{plain}

\end{document}